\journal{Statistics \& Probability Letters}
\newtheorem{lemma}{Lemma}
\newtheorem{theorem}{Theorem}
\newtheorem{corollary}{Corollary}
\newproof{proof}{Proof}
\newcommand{\E}{\mathbb E}
\newcommand{\Hh}{\mathcal H}
\newcommand{\one}{\mathbf{1}}
\newcommand{\R}{\mathbb R}
\newcommand{\X}{\mathcal X}
\begin{document}

\bibliographystyle{elsarticle-harv} 

\begin{frontmatter}
    \title{A simple geometric proof for\\ the characterisation of e-merging functions} 

    \author[1]{Eugenio Clerico}
    \ead{eugenio.clerico@gmail.com}

    \affiliation[1]{organization={Department of Statistics, University of Oxford},
    city={Oxford},
    country={UK}}

    \begin{abstract} 
        E-values offer a powerful framework for aggregating evidence across different (possibly dependent) statistical experiments. A fundamental question is to identify e-merging functions, namely mappings that merge several e-values into a single valid e-value. A simple and elegant characterisation of this function class was recently obtained by \cite{wang25only}, though via    technically involved arguments. This  note gives a short and intuitive geometric proof of the same characterisation, based on a supporting hyperplane argument applied to concave envelopes. We also show that the result holds even without imposing monotonicity in the definition of e-merging functions, which was needed for the existing proof. This shows that any non-monotone merging rule is automatically dominated by a monotone one, and hence extending the definition beyond the monotone case brings no additional generality.
    \end{abstract}
    \begin{keyword}
            e-values \sep e-merging functions \sep admissibility 
    \end{keyword}
\end{frontmatter}

\section{Introduction}
Testing with \emph{e-values} is emerging as a flexible alternative to classical \emph{p-value}–based hypothesis testing \citep{ramdas2023game}. One of the main advantages of e-values is their ability to aggregate evidence across experiments more effectively than standard p-values, a feature that is particularly valuable in multiple and sequential testing (see, \textit{e.g.}, \citealt{vovk2021evalues,wang2022false,hartog2025family}). Formally, an e-value is the observed realisation of an e-variable, defined as a non-negative random variable whose expectation under the null hypothesis is upper bounded by one. Functions that take several e-values as inputs and combine them into a single ``summary'' e-value are called \emph{e-merging} functions. A key strength of this framework is that e-merging can be easily performed without imposing any assumptions on the dependence structure of the individual e-values.

In a recent work, \cite{wang25only} provided a complete characterisation of e-merging functions, by showing that every such function must be dominated by an affine map, and hence yielding a remarkably simple final description. Despite the simplicity of the result, the proof proposed by \cite{wang25only} is technically involved, relying on a minimax theorem and tools from optimal transport duality, and proceeding through several auxiliary results. In this short note, we show that the same characterisation can be obtained by a much more concise, and arguably more intuitive, geometric argument. Moreover, our approach yields a slightly stronger result: the same conclusion holds without explicitly assuming monotonicity in the definition of an e-merging function. In particular, any potentially non-monotone e-merging function is automatically dominated by a non-decreasing one. This refinement does not follow directly from the arguments in \cite{wang25only}, where the monotonicity assumption is needed in the proof.

\section{Characterisation of the e-merging functions}
Given a measurable space $\X$, and a collection $\Hh$ of probability measures on $\X$ (the so-called \emph{null hypothesis}), an \emph{e-variable} for $\Hh$ is a non-negative measurable function $E:\X\to[0,\infty)$, whose expectation is upper bounded by $1$ under every $P\in\Hh$. For $K\geq 1$ integer, following \cite{wang25only} we define an \emph{e-merging function} as a component-wise non-decreasing mapping
$F\,:\,[0,\infty)^K\to[0,\infty)$ such that, for any space $\X$, any null $\Hh$ on $\X$, and any $K$-tuple $\mathbf E = (E_1,\dots, E_K)$ of e-variables for $\Hh$, the composition
$$F\circ\mathbf E\;:\;x\mapsto F\big(E_1(x),\dots, E_K(x)\big)$$ is again an e-variable for $\Hh$. As a side remark, e-variables are typically allowed to take the value infinity, and one would therefore wish to consider mappings $[0,\infty]^K \to [0,\infty]$ in the definition of e-merging functions. However, as noted by \cite{vovk2021evalues}, despite the conceptual importance of infinite e-values, it is sufficient to restrict attention to finite e-values when discussing e-merging functions (see Appendix C therein).

\cite{wang25only} proved the following simple and direct characterisation of e-merging functions. A non-decreasing mapping $F:[0,\infty)^K\to [0,\infty)$ is an e-merging function if, and only if, there is a $K$-tuple $\mathbf w = (w_1,\dots, w_K)$ of non-negative coefficients that sum up to at most $1$, such that, for any $\mathbf u \in [0,\infty)^K$,
$$F(\mathbf u) \leq 1 + \sum_{k=1}^K w_k(u_k-1)\,.$$
The proof of \cite{wang25only} is based on a clever and non-trivial application of optimal transport duality. The key idea is to express the worst-case expectation of a merging function over all dependence structures as the solution of a dual optimization problem involving separable dominating functions. This reduction effectively decouples the coordinates. The problem is then reduced to a one-dimensional bound, which is handled using a sharp characterization of admissible one-dimensional e-merging functions. While the final result is simple, the proof requires several auxiliary intermediate results and a delicate minimax argument.

Here we present a much simpler proof for the characterisation of e-merging functions. The key idea is to consider the concave envelope of $F$ and use a classical hyperplane separation argument to show that it is dominated by a suitable affine function. Unlike the approach of \cite{wang25only}, this proof does not rely on the monotonicity assumption in the definition of an e-merging function.\footnote{A key technical step in the proof of \cite{wang25only} is that the analysis can be restricted to upper semi-continuous e-merging functions. This reduction relies on a lemma from \cite{vovk2021evalues}, which shows that every (monotone) e-merging function is dominated by an upper semi-continuous e-merging function. To prove this result, \cite{vovk2021evalues} crucially exploited the monotonicity assumption.} For this reason, we formulate the result in a slightly more general setting. Specifically, we call a \emph{generalised} e-merging function  any Borel measurable mapping $F:[0,\infty)^K\to[0,\infty)$ such that $F \circ \mathbf E$ is an $e$-variable for every $K$-tuple $\mathbf E$ of $e$-variables (with respect to any $\X$ and any $\Hh$).

\begin{theorem}\label{thm:main}
    Let $K\geq 1$ be an integer and $F:[0,\infty)^K\to[0,\infty)$ a generalised e-merging function. Then, there is a $K$-tuple $\mathbf w \in [0,1]^K$ such that $\sum_{k=1}^K w_k\leq 1$ and, for every $\mathbf u\in [0,\infty)^K$,
    $$F(\mathbf u) \leq 1 + \sum_{k=1}^K w_k(u_k-1)\,.$$
\end{theorem}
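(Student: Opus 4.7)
My plan is to apply a supporting-hyperplane argument to the hypograph of $F$ in $\R^{K+1}$, producing an affine majorant of the required form. First I would restate the e-merging property in purely probabilistic terms: $F$ is a generalised e-merging function if and only if, for every Borel probability measure $P$ on $[0,\infty)^K$ whose marginal means $\int u_k\,dP$ are all at most $1$, one has $\int F\,dP\leq 1$. (The ``only if'' direction specialises to $\X=[0,\infty)^K$ with coordinate projections as e-variables; the converse follows by pushing forward the joint law of any admissible tuple $\mathbf E$.)

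Let $C\subseteq\R^{K+1}$ be the convex hull of the hypograph $\{(\mathbf u,t):\mathbf u\in[0,\infty)^K,\ t\leq F(\mathbf u)\}$. I would show that, for every $\varepsilon>0$, the point $(\mathbf 1,1+\varepsilon)$ does not belong to the closure $\overline C$. Any element of $C$ of the form $\sum_i \lambda_i(\mathbf u_i,F(\mathbf u_i))$ with $\sum_i \lambda_i\mathbf u_i=\mathbf 1$ corresponds to a discrete measure $P=\sum_i\lambda_i\delta_{\mathbf u_i}$ with all marginal means equal to $1$, so its $t$-coordinate is at most $1$ by the e-merging condition. To rule out limit points, given any sequence $(\mathbf v_n,s_n)\in C$ with $(\mathbf v_n,s_n)\to(\mathbf 1,1+\varepsilon)$, I would mix the corresponding measure with $\delta_{\mathbf 0}$ by a small weight $\delta>0$; this strictly reduces all marginal means below $1$ for $n$ large, so the e-merging condition yields $(1-\delta)s_n+\delta F(\mathbf 0)\leq 1$, and sending first $n\to\infty$ and then $\delta\to 0$ produces the contradiction $1+\varepsilon\leq 1$. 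Strictly separating $(\mathbf 1,1+\varepsilon)$ from $\overline C$ then gives an affine function $A(\mathbf u)=a_0+\sum_k a_k u_k$ with $A\geq F$ on $[0,\infty)^K$ and $A(\mathbf 1)<1+\varepsilon$; the coefficient of $t$ in the separating hyperplane must be strictly positive because $C$ contains arbitrarily negative $t$-values above every $\mathbf u$, so after normalisation the hyperplane takes this affine form.

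Since $A\geq F\geq 0$ on the positive orthant, each slope $a_k$ must be non-negative (otherwise $A\to-\infty$ along the $k$-th axis), and $a_0=A(\mathbf 0)\geq F(\mathbf 0)\geq 0$; in particular $\sum_k a_k\leq A(\mathbf 1)<1+\varepsilon$. Sending $\varepsilon\to 0$ along a sequence and extracting a subsequential limit of the (bounded) coefficient vectors produces a limiting affine majorant $A^*(\mathbf u)=a_0^*+\sum_k a_k^* u_k$ with $a_k^*\geq 0$, $a_0^*\geq 0$ and $a_0^*+\sum_k a_k^*\leq 1$. Setting $w_k=a_k^*$ and using $a_0^*\leq 1-\sum_k w_k$ finally gives $F(\mathbf u)\leq A^*(\mathbf u)\leq 1+\sum_k w_k(u_k-1)$, which is the desired inequality. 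I expect the main obstacle to be the perturbation step in the second paragraph: one needs separation from a \emph{closed} convex set rather than just a convex set, so it is essential to exclude $(\mathbf 1,1+\varepsilon)$ from $\overline C$ and not merely from $C$; mixing with $\delta_{\mathbf 0}$ is what lets one exploit simultaneously the non-negativity of $F$ and the inequality (rather than equality) in the marginal-mean constraint.
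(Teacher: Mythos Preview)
Your proposal is correct and follows essentially the same geometric idea as the paper: both show, via a discrete probability-space construction, that no point above height $1$ over $\mathbf 1$ lies in the (closed) convex hull of the hypograph of $F$, and then obtain the affine majorant by a hyperplane separation and read off the sign and sum constraints on the slopes from non-negativity of $F$. The paper streamlines the endgame by passing to the concave envelope $\hat F$, deducing $\hat F(\mathbf 1)\leq 1$ directly and applying the supporting hyperplane theorem once, which avoids your $\varepsilon\to 0$ compactness extraction and the mixing-with-$\delta_{\mathbf 0}$ perturbation for the closure.
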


We remark that the monotonicity requirement in the definition of e-merging functions is nevertheless natural from a statistical perspective: larger e-values correspond to stronger evidence against the null, and one would therefore expect this ordering to be preserved under merging. However \Cref{thm:main} shows that this restriction is not essential at a mathematical level: any generalised e-merging function is always dominated by a (monotone) e-merging function. In this sense, allowing for non-monotonicity does not enlarge the effective class of admissible merging procedures, and there is therefore effectively no  gain in extending the definition beyond the monotone case.

\begin{corollary}
    Let $K\geq 1$ be an integer and $F:[0,\infty)^K\to[0,\infty)$ Borel-measurable. Then, $F$ is a generalised e-merging function if, and only if, there is a $K$-tuple $\mathbf w \in [0,1]^K$ such that $\sum_{k=1}^K w_k\leq 1$ and, for every $\mathbf u\in [0,\infty)^K$,
    $$F(\mathbf u) \leq 1 + \sum_{k=1}^K w_k(u_k-1)\,.$$
    In particular, every generalised e-merging function is dominated by a (monotone) e-merging function.
\end{corollary}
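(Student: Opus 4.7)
The overall strategy is to pass to the concave envelope $G$ of $F$ on $[0,\infty)^K$, observe that $G$ still satisfies the admissibility condition defining a generalised e-merging function, and then extract the affine dominator with the correct sign and normalisation from a single supporting hyperplane to $G$ at $\mathbf{1}=(1,\dots,1)$. The starting point will be the following reformulation of the hypothesis: $F$ is a generalised e-merging function if and only if $\int F\,d\mu\le 1$ for every Borel probability measure $\mu$ on $[0,\infty)^K$ whose marginals have means at most $1$, since such a $\mu$ realises a valid $K$-tuple of e-variables via the coordinate maps on the probability space $([0,\infty)^K,\mu)$. Testing with the two-point measures $(1-1/M)\delta_{\mathbf{0}}+(1/M)\delta_{\mathbf{u}}$ at $M=\max(1,\max_k u_k)$ immediately yields the a priori pointwise bound $F(\mathbf{u})\le\max(1,\max_k u_k)\le 1+\sum_k u_k$, which will guarantee that the concave envelope $G$ is finite on $[0,\infty)^K$.

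\textbf{Admissibility of the envelope.} By construction $G\ge F\ge 0$, $G$ is concave and Borel measurable, and $G\le 1+\sum_k u_k$. I would first check that $G(\mathbf{v})\le 1$ for every $\mathbf{v}\in[0,1]^K$: by the Carath\'eodory description of the concave envelope as a supremum over convex combinations, for each $\varepsilon>0$ there exist coefficients $\lambda_i\ge 0$ with $\sum_i\lambda_i=1$ and points $\mathbf{u}_i$ with $\sum_i\lambda_i\mathbf{u}_i=\mathbf{v}$ such that $\sum_i\lambda_i F(\mathbf{u}_i)\ge G(\mathbf{v})-\varepsilon$; the associated discrete measure is admissible, so the reformulation forces $G(\mathbf{v})\le 1+\varepsilon$. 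Once this is in place, for any admissible $\mu$ with mean $\mathbf{v}\in[0,1]^K$, Jensen's inequality applied to the concave function $G$ delivers $\int G\,d\mu\le G(\mathbf{v})\le 1$, so $G$ is itself a generalised e-merging function.

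\textbf{Supporting hyperplane and conclusion.} Since $G$ is finite and concave on $[0,\infty)^K$ and $\mathbf{1}$ lies in the interior, there exists $\mathbf{b}\in\mathbb{R}^K$ such that $G(\mathbf{u})\le L(\mathbf{u}):=G(\mathbf{1})+\sum_k b_k(u_k-1)$ for every $\mathbf{u}\in[0,\infty)^K$. The two required constraints on $\mathbf{b}$ then drop out of the non-negativity of $G$: applying $L\ge G\ge 0$ at $\mathbf{u}=\mathbf{1}+t e_k$ and letting $t\to\infty$ forces $b_k\ge 0$, while applying it at $\mathbf{u}=\mathbf{0}$ gives $G(\mathbf{1})-\sum_k b_k=L(\mathbf{0})\ge G(\mathbf{0})\ge 0$, hence $\sum_k b_k\le G(\mathbf{1})\le 1$. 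Setting $w_k:=b_k$ and using $G(\mathbf{1})\le 1$ once more yields $F(\mathbf{u})\le G(\mathbf{u})\le L(\mathbf{u})\le 1+\sum_k w_k(u_k-1)$, as required. The only place where some genuine care is needed is the admissibility of $G$: one must verify that the concave envelope of a Borel function is itself Borel measurable and $\mu$-integrable, and that Jensen's inequality applies in the stated form. The linear growth bound on $G$ together with the standard facts that finite concave functions are continuous in the interior of their effective domain and Borel measurable on its closure should resolve these measure-theoretic subtleties.
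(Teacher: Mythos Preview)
Your argument is correct and follows essentially the same route as the paper: both pass to the concave envelope of $F$, bound its value at $\mathbf 1$ by $1$ via the Carath\'eodory description (your discrete-measure step is exactly the paper's Lemma~1), and then extract the affine dominator from a supporting hyperplane at $\mathbf 1$, with the constraints $w_k\ge 0$ and $\sum_k w_k\le 1$ read off from non-negativity of the envelope. Your detour through Jensen's inequality to show that the envelope $G$ is itself a generalised e-merging function is unnecessary---only $G(\mathbf 1)\le 1$ is ever used---but your explicit a~priori growth bound $F(\mathbf u)\le\max(1,\max_k u_k)$ ensuring finiteness of the envelope is a detail the paper glosses over; note also that the easy converse direction of the ``if and only if'' (any $F$ dominated by $F_{\mathbf w}$ is a generalised e-merging function) is not spelled out in your write-up.
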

\begin{proof}
    Fix a $\mathbf w$ as in the statement, and let $F_{\mathbf{w}}:u\mapsto 1 + \sum_{k=1}^K w_k(u_k-1)$. It is straightforward to check that this is a (non-decreasing) e-merging function. Then, any Borel $F:[0,\infty)^K\to[0,\infty)$ dominated by $F_{\mathbf{w}}$ is clearly a generalised e-merging function. The other direction is precisely \Cref{thm:main}.
\end{proof}

\section{Geometric proof of the characterisation}
The idea to prove \Cref{thm:main} is rather simple. We consider the concave envelope $\hat F$ of the generalised e-merging function $F$ and we show that $\hat F(\one) \leq 1$, where $\one$ denotes the vector 
$(1,\dots,1)\in\R^K$. Then, the supporting hyperplane  theorem will immediately yield the desired conclusion.

First, let us establish a simple preliminary lemma, which will be the key ingredient of our proof. 
\begin{lemma}\label{lemma}
    Fix an integer $n\geq 1$, consider $n$ vectors $\mathbf{u}^{(1)},\dots, \mathbf{u}^{(n)}\in [0,\infty)^K$, and a weight vector $(q_1,\dots, q_n)\in[0,1]^n$, such that $\sum_{j=1}^n q_j =1$. We have the following implication: $$\sum_{j=1}^n q_j \mathbf{u}^{(j)}= \mathbf{1}\qquad\implies\qquad\sum_{j=1}^n q_j F(\mathbf{u}^{(j)})\leq 1\,.$$
\end{lemma}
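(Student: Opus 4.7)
The plan is to produce, for any given data $(\mathbf{u}^{(1)},\dots,\mathbf{u}^{(n)})$ and weights $(q_1,\dots,q_n)$ satisfying the hypothesis, an explicit probability space, null hypothesis, and $K$-tuple of e-variables whose vector of realisations takes exactly the prescribed values with exactly the prescribed probabilities. Once this is set up, applying the definition of generalised e-merging function to $F\circ\mathbf{E}$ directly yields the claim.

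Concretely, I would take $\X=\{1,\dots,n\}$ equipped with its discrete $\sigma$-algebra, and let $\Hh=\{P\}$ where $P$ is the probability measure assigning mass $q_j$ to the singleton $\{j\}$. For each $k\in\{1,\dots,K\}$, define $E_k:\X\to[0,\infty)$ by
\[E_k(j) = u^{(j)}_k,\]
so that $E_k$ is automatically measurable and non-negative. The crucial check is the e-variable condition: under $P$,
\[\E_P[E_k] = \sum_{j=1}^n q_j\, u^{(j)}_k = \bigl(\textstyle\sum_{j=1}^n q_j\mathbf{u}^{(j)}\bigr)_k = 1,\]
where the last equality uses the hypothesis $\sum_j q_j\mathbf{u}^{(j)}=\mathbf{1}$. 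Hence each $E_k$ is an e-variable for $\Hh$, and $\mathbf{E}=(E_1,\dots,E_K)$ is a valid $K$-tuple.

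Since $F$ is a generalised e-merging function, $F\circ\mathbf{E}$ is itself an e-variable for $\Hh$, and its expectation is
\[\E_P[F\circ\mathbf{E}] = \sum_{j=1}^n q_j\, F(\mathbf{u}^{(j)}) \leq 1,\]
which is exactly the desired inequality. There is essentially no obstacle here: once one sees that the statement is begging for a finite discrete probability space putting mass $q_j$ on the point whose $k$-th coordinate is $u^{(j)}_k$, the argument is a single line. The only thing to note is that we did not have to appeal to monotonicity of $F$ at any stage — Borel measurability of $F$ ensures that $F\circ\mathbf{E}$ is measurable on the discrete space $\X$ — which is consistent with the paper's aim of treating the generalised (non-monotone) case.
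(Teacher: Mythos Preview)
Your proof is correct and is essentially identical to the paper's own argument: both construct the finite discrete space $\{1,\dots,n\}$ (or $\{x_1,\dots,x_n\}$) with the probability measure assigning mass $q_j$ to the $j$-th point, define $E_k(j)=u^{(j)}_k$, verify that each $E_k$ has expectation $1$ under this measure, and then read off the inequality from the e-merging property of $F$. Your remark that monotonicity is never used is also exactly the point the paper makes.
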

\begin{proof}
    Let $\X = \{x_1,\dots, x_n\}$ be a  set made of $n$ distinct elements, and consider the hypothesis $\Hh = \{Q\}$ on $\X$, with $Q = \sum_{j=1}^n q_j \delta_{x_j}$. Define $K$ random variables $E_1,\dots, E_K$ on $\X$ via $E_k(x_j) = u^{(j)}_k$. Then, 
    $$\E_Q[E_k] = \sum_{j=1}^n q_j E_k(x_j) = \sum_{j=1}^n q_j u^{(j)}_k = 1\,,$$ and so each $E_k$ is an e-variable for $\Hh$. We have
    $$\sum_{j=1}^n q_j F\big(\mathbf{u}^{(j)}\big) = \sum_{j=1}^n q_j F\big(E_1(x_j),\dots, E_K(x_j)\big) = \E_Q[F(E_1,\dots, E_K)] \leq1\,,$$ since $F$ being a generalised e-merging function implies that $F(E_1,\dots, E_K)$ is an e-variable for $\Hh$.
\end{proof}

We now define a function $\hat F:[0,\infty)^K\to[0,\infty]$ by taking the supremum of $F$ over all finite convex combinations, namely
$$\hat F(\mathbf{u}) = \sup_{n\geq 1}\sup_{q\in \Delta_n}\sup\left\{\sum_{j=1}^n q_j F(\mathbf{u}^{(j)})\,\;:\,\;\mathbf{u}^{(j)}\in[0,\infty)^K\;\;\forall j\,,\;\;\sum_{j=1}^n q_j\mathbf{u}^{(j)} =  \mathbf{u}\right\}\,,$$
where $\Delta_n = \{q\in[0,1]^n\,:\,\sum_{j=1}^n q_j=1\}$. The case $n=1$ shows that $\hat F(\mathbf{u}) \geq F(\mathbf{u})$ for all $\mathbf{u}$.

By \Cref{lemma}, any candidate combination $\sum_{j=1}^n q_j \mathbf{u}^{(j)} = \mathbf{1}$ satisfies $\sum_{j=1}^n q_j F(\mathbf{u}^{(j)}) \leq 1$. Taking the supremum over all such combinations in the definition of $\hat F$ yields exactly
$$\hat F(\mathbf{1}) \leq 1\,.$$

We now show that $\hat F$ is concave (in fact, $\hat F$ is the \emph{concave envelope} of $F$). Fix any $\mathbf{v}, \mathbf{w} \in [0,\infty)^K$ and $\alpha \in [0,1]$. For any $\varepsilon > 0$, the definition of the supremum allows us to find integers $n, m \ge 1$, weights $p \in \Delta_n$, $q \in \Delta_m$, and sets  $\{\mathbf{v}^{(j)}\}_{j=1}^n$ and $\{\mathbf{w}^{(i)}\}_{i=1}^m$ such that $\sum_{j=1}^n p_j \mathbf{v}^{(j)} = \mathbf{v}$ and $\sum_{i=1}^m q_i \mathbf{w}^{(i)} = \mathbf{w}$, satisfying
$$\sum_{j=1}^n p_j F(\mathbf{v}^{(j)}) \geq \hat F(\mathbf{v}) - \varepsilon \qquad \text{and} \qquad \sum_{i=1}^m q_i F(\mathbf{w}^{(i)}) \geq \hat F(\mathbf{w}) - \varepsilon\,.$$
Now note that we can obtain an element in $r\in\Delta_{n+m}$ by reweighting and concatenating $q$ and $p$, that is $r=(\alpha p_1, \dots, \alpha p_n, (1-\alpha) q_1,\dots, (1-\alpha)q_m)\in\Delta_{n+m}$. We can then use $r$ to define a convex combination of all the vectors in $\{\mathbf{v}^{(j)}\}_{j=1}^n\cup\{\mathbf{w}^{(i)}\}_{i=1}^m$, and we get
$$\alpha \sum_{j=1}^n p_j F(\mathbf{v}^{(j)}) + (1-\alpha) \sum_{i=1}^m q_i F(\mathbf{w}^{(i)}) \geq \alpha \hat F(\mathbf{v}) + (1-\alpha) \hat F(\mathbf{w}) - \varepsilon\,.$$
Since the left side is a valid candidate in the supremum defining $\hat F(\alpha \mathbf{v} + (1-\alpha)\mathbf{w})$, we get that $$\hat F(\alpha \mathbf{v} + (1-\alpha)\mathbf{w}) \geq \alpha \hat F(\mathbf{v}) + (1-\alpha) \hat F(\mathbf{w}) - \varepsilon\,.$$ Letting $\varepsilon$ tend to $0$ we conclude that $\hat F$ is concave. 

Since $\hat F$ is concave, its hypograph $S=\{(\mathbf u,t)\in[0,\infty)^K\times\mathbb R:\ t\le \hat F(\mathbf u)\}$ is convex. Because $\hat F(\one)\leq 1$, the sets $S$ and $R=\{(\one,s): s>1\}$ are disjoint. By a classical separation theorem in finite-dimensional convex analysis (see, e.g., Chapter A, \S4.1 of \citealt{hiriart-urruty2001fundamentals}), these two disjoint convex sets can be separated by a hyperplane, meaning there exist a non-zero vector $(\mathbf b,a)\in\mathbb R^K\times\mathbb R$ and a scalar $c\in\mathbb R$ such that $\mathbf b\cdot\mathbf u + a t \le c \le \mathbf b\cdot\one + a s$, for all $(\mathbf u,t)\in S$ and all $s>1$.
Since the right-hand inequality holds for arbitrarily large $s$, we must have $a\ge 0$. If $a=0$, then
$\mathbf b\cdot\mathbf u\le c\le \mathbf b\cdot\one$ for all $ \mathbf u\in[0,\infty)^K$,
which is impossible unless $\mathbf b=\mathbf 0$, contradicting the fact that $(\mathbf b,a)\neq \mathbf 0$. Hence, it must be that $a>0$, which geometrically simply means that the hyperplane cannot be vertical.

Now fix $\mathbf u\in[0,\infty)^K$. Since the inequality $\mathbf b\cdot\mathbf u + a t \le c$ holds for every $t\le \hat F(\mathbf u)$, taking the supremum over such $t$ gives $\mathbf b\cdot\mathbf u + a\hat F(\mathbf u)\leq c$. On the other hand, the inequality $c\leq \mathbf b\cdot\one + a s$ holds for every $s>1$, so taking the infimum over $s>1$ yields $c\le \mathbf b\cdot\one + a$.
Dividing the two inequalities by $a>0$, we obtain
$$F(\mathbf u) \leq \hat F(\mathbf u)\leq 1+\sum_{k=1}^K w_k(u_k-1)\,,$$
where $\mathbf{w}=-\mathbf{b}/a$. Note that $\hat F$ is non-negative on the whole domain $[0,\infty)^K$, and so the components of $\mathbf{w}$ are non-negative and have sum at most $1$, as required. 

\paragraph{Acknowledgements.} The author acknowledges the use of ChatGPT (OpenAI) and Gemini (Google) for assistance in exploring proof ideas and for improving the clarity of the exposition. All results and proofs were independently verified by the author, who takes full responsibility for the content.
\bibliography{bib.bib}
\end{document}